\documentclass[12pt]{amsart}
\usepackage[utf8]{inputenc}
\usepackage[margin=3cm]{geometry}
\setcounter{tocdepth}{2}
\usepackage{color}
\usepackage{babel}
\usepackage{amsmath}
\usepackage{amsthm}
\usepackage{amssymb}
\usepackage{graphicx}
\PassOptionsToPackage{normalem}{ulem}
\usepackage{ulem}
\usepackage[unicode=true,pdfusetitle,
 bookmarks=true,bookmarksnumbered=false,bookmarksopen=false,
 breaklinks=false,pdfborder={0 0 1},backref=page,colorlinks=true]
 {hyperref}
\hypersetup{
 linkcolor=blue, citecolor=blue}

\numberwithin{equation}{section}
\numberwithin{figure}{section}
\theoremstyle{plain}
\newtheorem{thm}{Theorem}[section]
\theoremstyle{plain}

\theoremstyle{plain}
\newtheorem{prop}[thm]{Proposition}
\theoremstyle{plain}
\newtheorem{lem}[thm]{Lemma}
\theoremstyle{remark}

\theoremstyle{definition}






\begin{document}


\newcommand\R{\mathbf{R}}%
\newcommand\N{\mathbf{N}}%
\global\long\def\prob{\mathbf{P}}%

\title{Random walk on hyperbolic groups and proper powers}
\author{Mikael de la Salle}
\date{\today}
\maketitle
\begin{abstract}
  The probability that a symmetric random walk in a hyperbolic group reaches a proper power has the same exponential rate of decay as the probability of return to the identity.
\end{abstract}

Let $\mu$ be a symmetric probability measure on a Gromov-hyperbolic group, whose support is finite, generating, and contains the identity. Let $\rho$ denote its spectral radius, that is the norm of the operator $\lambda(\mu)$ of left-convolution by $\mu$ on $\ell_2(G)$. Let $(g_n)_n$ be the random walk starting at the identity. We have the easy bound
\begin{equation}
\forall g\in\Gamma,\prob(g_{n}=g)=\langle\lambda(\mu)^{n}\delta_{e},\delta_{g}\rangle\leq\rho^{n}.\label{eq:prob_for_RW_in_terms_of_spectral_radius}
\end{equation}
By Kesten's theorem, this bound is almost optimal in the following sense:
\[\lim_n \prob(g_{n}=e)^{\frac 1 {n}}=\rho.\]

An element of $\Gamma$ is called a proper power if it is of the form $g^d$ for some $g \in \Gamma$ and $d \geq 2$. The main result of this note is that the probability that the random walk reaches a proper power at time $n$ is not exponentially larger than the probability that it reaches $0$. The necessity of the hyperbolicity assumption is discussed in \S~\ref{sec:discussion}.

\begin{thm}\label{thm:properPower} There is a polynomial $P$ such that the probability that $g_n$ is a proper power is $\leq P(n) \rho^n$.
\end{thm}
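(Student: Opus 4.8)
The strategy is to compare, shell by shell, the mass the walk puts on proper powers with the Green function of the walk at the spectral radius, and then to exploit that the number of proper powers in a ball is essentially the square root of its volume --- the reciprocal of the decay rate of that Green function. Write $\mathcal P$ for the set of proper powers and $C:=\max_{s\in\mathrm{supp}\,\mu}|s|$, so that $\prob(g_n=h)=0$ whenever $|h|>Cn$. For $h\in\Gamma$ let $G(h):=\sum_{m\ge0}\rho^{-m}\prob(g_m=h)$ be the Green function evaluated at $1/\rho$; on a hyperbolic group the walk is $\rho$-transient and $G(h)<\infty$. Since $\prob(g_n=h)\le\rho^n G(h)$, we get
\[
\prob(g_n\in\mathcal P)=\sum_{\substack{h\in\mathcal P\\ |h|\le Cn}}\prob(g_n=h)\ \le\ \rho^n\!\!\sum_{\substack{h\in\mathcal P\\ |h|\le Cn}}\!\!G(h),
\]
and it suffices to bound the truncated sum on the right by a polynomial in $n$.

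The main ingredient is the decay of the Green function at the spectral radius: there is $C_1<\infty$ with $G(h)\le C_1\,e^{-\frac v2|h|}$ for all $h$, where $e^{v}:=\lim_R|B_R|^{1/R}$ is the growth rate of $\Gamma$ (equivalently, the first--passage series $F_{1/\rho}(e,h)=G(h)/G(e)$ decays along geodesics at rate $v/2$). This is an elementary computation for free groups, but in general it is the hard analytic point: it comes from the Ancona-type inequalities at the spectral radius together with the square-root singularity of the Green function --- the ``dimension drop'' of the $1/\rho$-harmonic measure (Gouëzel and related work) --- and it is the only place where hyperbolicity enters in a genuinely non-combinatorial way.

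The complementary ingredient is the estimate $|\mathcal P\cap B_R|\le P_1(R)\,e^{\frac v2 R}$ for a fixed polynomial $P_1$. The torsion part is easy: every torsion element is a proper power and $\Gamma$ has finitely many conjugacy classes of torsion elements, while a conjugate of a fixed element lying in $B_R$ forces the corresponding geodesic to travel out to distance $\simeq R/2$ and back, so there are $O(R\,e^{\frac v2 R})$ of them in $B_R$. For the infinite-order part, write an infinite-order proper power as $h=cw^dc^{-1}$ with $d\ge2$, $w$ cyclically reduced and $g:=cwc^{-1}$ a primitive root of $h$ (this data is attached to $h$ with bounded ambiguity); by thinness of triangles $|h|\ge 2|c|+d|w|-O(1)$, and $d\le|h|/\varepsilon_0$ by the uniform lower bound $\varepsilon_0>0$ on stable lengths of infinite-order elements. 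There are at most $|B_{|w|}|\le e^{v|w|}$ conjugacy classes available for $w$, and for fixed $(w,d)$ the admissible conjugators form one coset of the virtually cyclic group $C(w^d)$, hence yield at most $|B_{(R-d|w|)/2}|\le e^{v(R-d|w|)/2}$ distinct values of $h$; summing over $|w|$, over $d\in\{2,\dots,\lfloor R/\varepsilon_0\rfloor\}$ and over the bounded ambiguity,
\[
|\mathcal P\cap B_R|\ \lesssim\ e^{\frac v2 R}\sum_{\ell\ge1}\ \sum_{d\ge2} e^{\,v\ell(1-d/2)}\ \lesssim\ (R+1)\,e^{\frac v2 R},
\]
the $d=2$ terms giving the $O(R)$ and the $d\ge3$ terms being geometrically summable.

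Putting the two ingredients together,
\[
\sum_{\substack{h\in\mathcal P\\ |h|\le Cn}}\!\!G(h)\ \le\ \sum_{r=0}^{Cn}|\mathcal P\cap S_r|\,\cdot\,C_1e^{-\frac v2 r}\ \le\ C_1\sum_{r=0}^{Cn}P_1(r)\ \le\ C_1\,P_2(n)
\]
for a polynomial $P_2$, so $\prob(g_n\in\mathcal P)\le P(n)\rho^n$ with $P:=C_1P_2$. The hard part is the first ingredient --- the sharp rate $v/2$ in $G(h)\le C_1e^{-\frac v2|h|}$; the rest is the trivial horizon bound and the geometry of roots and conjugacy classes in hyperbolic groups. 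What makes the theorem work is exactly that the decay rate of the critical Green function and the exponential growth rate of the set of proper powers are inverse to each other, both governed by half the dimension of $\partial\Gamma$.
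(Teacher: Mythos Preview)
Your argument has a genuine gap at its very first ingredient: the pointwise bound $G(h)\le C_1 e^{-\frac v2|h|}$ for the Green function at the spectral radius is not a theorem, and in fact fails already for non-uniform symmetric measures on the free group. Take $F_2=\langle a,b\rangle$ with $\mu(a^{\pm1})=p<q=\mu(b^{\pm1})$; by the tree structure $G_{1/\rho}(e,g)=G_{1/\rho}(e,e)\prod_i F_{s_i}(1/\rho)$ along the reduced spelling of $g$, and at the symmetric point $p=q$ one has $F_a=F_b=3^{-1/2}$, but for $p\neq q$ the two first-passage values differ and the larger one exceeds $3^{-1/2}=e^{-v/2}$. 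So $G_{1/\rho}(e,b^n)$ decays at a rate strictly slower than $v/2$, contradicting your bound. What \emph{is} true (Gouëzel's Ancona inequalities up to the spectral radius) is that $-\log G_{1/\rho}(e,\cdot)$ is a metric quasi-isometric to the word metric, with critical exponent $2$ in the sense that $\sum_g G_{1/\rho}(e,g)^s<\infty$ iff $s>2$; this gives $v/2$ only as an \emph{averaged} decay rate, not a pointwise one, and your Cauchy--Schwarz with the counting $|\mathcal P\cap B_R|\lesssim Re^{vR/2}$ then only yields an exponential bound. The ``dimension drop'' you invoke concerns the harmonic measure at $r=1$ and says its dimension is strictly \emph{below} that of $\partial\Gamma$; it does not produce the exact exponent $v/2$ at $r=1/\rho$, and certainly not uniformly over directions.

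Your counting of proper powers is essentially correct and overlaps with the geometry in Lemmas~\ref{lem:single_conjugacy_class}--\ref{lem:power_conjugacy_class}, but the paper proceeds very differently on the analytic side: instead of any Green-function asymptotics it proves a self-contained ``surgery'' estimate (Lemma~\ref{lem:geodesic1}) --- if $|h_1|+\dots+|h_k|\le|h_1\cdots h_k|+K$ then $\prob(g_n=h_1\cdots h_k)$ is bounded, up to a polynomial, by a convolution of the individual hitting probabilities --- and then feeds in the decomposition $x=gh^dg^{-1}$ with $|g|+|h|+|h^{d-2}|+|h|+|g^{-1}|\le|x|+O(\delta)$. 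The cancellations then happen via $\sum_g\prob(g_{k_1}=g)\prob(g_{k_5}=g^{-1})=\prob(g_{k_1+k_5}=e)$ and $\sum_{h\in C}\prob(g_{k_2}=h)\prob(g_{k_4}=h)=\prob(g_{k_2+k_4}=e,\,g_{k_2}\in C)$, using only symmetry of $\mu$ and the elementary bound $\prob(g_m=x)\le\rho^m$. No Ancona, no local limit theorem, no boundary theory.
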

For free groups and $\mu$ the uniform measure on a free generating set, this was proved by Friedman \cite[Lemma 2.4]{MR1978881}. For general measure on free groups, we proved it with Michael Magee \cite{magee2025strongasymptoticfreenesshaar}. Michael Magee, Doron Puder and Ramon Van Handel proved it for surface groups \cite[Section 5]{magee2025strongconvergenceuniformlyrandom} using a realization of surface groups into the hyperbolic plane. Theorem~\ref{thm:properPower} is a natural generalization of this result. The idea is always the same, but the details become slightly more involved, and the proof stays in the group.

Since hyperbolic groups have the rapid decay property \cite{MR943303,MR972078}, Theorem~\ref{thm:properPower} implies that the indicator function of the set of proper powers in $\Gamma$ is tempered in the sense of \cite[Section 5]{magee2025strongasymptoticfreenesshaar} \cite[Proposition 6.3]{magee2025strongasymptoticfreenesshaar}.

The motivation for this kind of results comes from random matrix theory and more precisely strong convergence of random matrices, that is convergence of operator norms. Indeed, the novel approach to strong convergence that appeared in \cite{chen2025newapproachstrongconvergence} and that was later developped in \cite{magee2025strongasymptoticfreenesshaar,chen2024newapproachstrongconvergence,magee2025strongconvergenceuniformlyrandom}, relied on three different ingredients: (1) Taylor/Gevrey expansion of statistics of random matrix moments, (2) the polynomial method and distributions à la Schwartz, (3) proper power countings. Theorem~\ref{thm:properPower} takes care of the third step for all hyperbolic groups. But it does not yet have applications to random matrix theory beyond the already known cases.
\subsection*{Acknowledgements} I would like to thank Michael Magee, Doron Puder and Ramon Van Handel for interesting discussions, comments and suggestions about the main theorem and its proof. I also would like to thank Goulnara Arzhantseva for discussions about Golod-Shafarevich and Ol'shanski-Sapir groups.

\section{Facts on hyperbolic geometry}

A geodesic metric space $(X,d)$ is called hyperbolic if there is $\delta_0$ such that for every triple $x,y,z\in X$ and every choice of geodesics $[x,y],[y,z],[z,w]$, $[x,y]$ is contained in the $\delta_0$-neighbourhood of $[y,z] \cup [z,w]$. A finitely generated group is hyperbolic if it has a Cayley graph that is hyperbolic, and then they all are.

The Gromov product $(x,y)_z$ measures how far $z$ is from being on a geodesic between $x$ and $y$:
\[ (x,y)_z = \frac 1 2(d(x,z)+d(y,z) - d(x,y)).\]
We recall some classical consequences of hyperbolicity.
\begin{prop}\label{prop:hyperbolic} If a geodesic metric space $(X,d)$ is hyperbolic, then there is a constant $\delta$ such that:
  \begin{enumerate}
  \item\label{item:thintriangle} For every triple $x,y,z\in X$ and every choice of geodesics $[x,y],[y,z],[z,w]$, $[x,y]$ is contained in the $\delta$-neighbourhood of $[y,z] \cup [z,w]$. 
  \item\label{item:4pointcondition} $\min ( (x,z)_w,(y,z)_w) \leq (x,y)_w+\delta$ for every $x,y,z,w\in X$,
    \item\label{item:alphageod_close_to_geod} For every geodesic segment $[x,y]\subset X$ and every $z \in X$, $(x,y)_z \leq d(z,[x,y]) \leq (x,y)_z +\delta$,
  \item\label{item:path_close_to_geodesic} Every path $p$ of length $\leq n$ joining $x$ to $y$ and every $z \in X$, $d(z,p) \leq (x,y)_z + \delta(\log n+1)$,
  \end{enumerate}
\end{prop}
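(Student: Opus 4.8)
The plan is to fix a constant $\delta_0$ as in the definition of hyperbolicity and to check that a suitable fixed multiple of $\max(\delta_0,1)$ serves as $\delta$ for all four items at once, deducing each item from the earlier ones; so I treat them in the order (1), (3), (2), (4). Item (1) is the hypothesis itself, valid already with $\delta=\delta_0$. In item (3) the lower bound $(x,y)_z\le d(z,[x,y])$ is purely metric and needs no hyperbolicity: for $w\in[x,y]$, adding the triangle inequalities $d(z,w)\ge d(z,x)-d(x,w)$ and $d(z,w)\ge d(z,y)-d(y,w)$ and using $d(x,w)+d(w,y)=d(x,y)$ gives $2d(z,w)\ge d(z,x)+d(z,y)-d(x,y)=2(x,y)_z$; take the infimum over $w\in[x,y]$.

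For the upper bound in item (3) I use the classical ``internal points are close'' lemma. Fix geodesic sides $[x,y],[y,z],[z,x]$ and note $(x,y)_z\le\min(d(z,x),d(z,y))$, so there is $p\in[z,x]$ with $d(z,p)=(x,y)_z$ and hence $d(x,p)=d(z,x)-(x,y)_z=(y,z)_x$; let $m\in[x,y]$ be the point with $d(x,m)=(y,z)_x$ as well. I claim $d(p,m)\le 4\delta_0$. By thinness $p$ is $\delta_0$-close to $[x,y]\cup[y,z]$: if it is $\delta_0$-close to some $u\in[x,y]$ then $|d(x,u)-d(x,m)|=|d(x,u)-d(x,p)|\le\delta_0$ forces $d(p,m)\le 2\delta_0$; if instead $p$ is $\delta_0$-close to some $w\in[y,z]$, run the same dichotomy for $m$, which by thinness is $\delta_0$-close to $[y,z]\cup[z,x]$, comparing distances to $x$, to $y$, or to $z$ according to the case, to get $d(p,m)\le 4\delta_0$ in every remaining case. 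Then $d(z,[x,y])\le d(z,m)\le d(z,p)+d(p,m)\le(x,y)_z+4\delta_0$.

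Item (2) now follows from (3) and thinness: given $x,y,z,w$, take a nearest point $m\in[x,y]$ to $w$; thinness of the triangle on $x,y,z$ puts $m$ within $\delta_0$ of $[y,z]\cup[z,x]$, so $d(w,[z,x])\le d(w,m)+\delta_0$ or $d(w,[y,z])\le d(w,m)+\delta_0$, and applying (3) on both ends gives $\min((x,z)_w,(y,z)_w)\le d(w,[x,y])+\delta_0\le(x,y)_w+5\delta_0$. Item (4) is the standard dyadic-bisection estimate: parametrise $p$ proportionally to arc length on $[0,L]$ with $L\le n$ and endpoints $x,y$, and, starting from $[0,L]$, bisect repeatedly; at each step, by item (2) applied to the two endpoint-images, the midpoint-image and $z$, there is a half of the current interval on which the Gromov product based at $z$ of the two endpoint-images exceeds that of the whole interval by at most $\delta$, and we pass to such a half. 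After at most $\log_2 n+1$ bisections the interval has length $\le 1$, so its $p$-images $u,u'$ satisfy $d(u,u')\le 1$ and $(u,u')_z\le(x,y)_z+(\log_2 n+1)\delta$; since $u$ lies on $p$ and $(u,u')_z\ge d(z,u)-d(u,u')\ge d(z,u)-1$, we obtain $d(z,p)\le d(z,u)\le(x,y)_z+(\log_2 n+1)\delta+1$, which is of the required form after enlarging $\delta$ by a bounded factor. Taking $\delta$ to be the largest of the constants produced — a fixed multiple of $\max(\delta_0,1)$ — makes all four items hold simultaneously.

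All of this is classical hyperbolic geometry; the one step that needs genuine care is the upper bound in item (3), since the naive attempt — applying thinness to $p$ alone — does not close, and one really has to track the corresponding internal point $m$ on the third side through a two-level case analysis.
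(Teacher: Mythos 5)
Your proof is correct. The paper itself gives no argument here: item (1) is taken as the definition and items (2)--(4) are delegated to the literature (Bridson--Haefliger III.H.1.17 and III.H.1.6, and Coornaert--Delzant--Papadopoulos, Lemme 17), with (4) obtained by combining the cited path-vs-geodesic estimate with (3). What you supply instead is a self-contained derivation from the thin-triangles hypothesis, and it is essentially the standard textbook proof of each cited fact: the lower bound in (3) is the purely metric inequality $(x,y)_z\le d(z,[x,y])$; the upper bound is the ``insize'' argument matching the internal point $p\in[z,x]$ at distance $(y,z)_x$ from $x$ with its counterpart $m\in[x,y]$ (your two-level case analysis does close: in the remaining sub-case both $p$ and $m$ project to points of $[y,z]$ whose distances to $z$ are within $\delta_0$ of $(x,y)_z$, whence $d(p,m)\le 4\delta_0$); (2) follows by projecting $w$ to $[x,y]$ and using thinness plus both halves of (3); and (4) is the dyadic bisection driven by (2), run directly on Gromov products, which if anything is slightly cleaner than the paper's route since it never introduces an auxiliary geodesic $[x,y]$. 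Two cosmetic points: in (2) a nearest point on $[x,y]$ need not exist in a general geodesic space, so take an $\epsilon$-almost-nearest point and let $\epsilon\to 0$; and the occurrences of $w$ in item (1) of the statement are typos for $x$, which you have correctly read as the usual thin-triangle condition.
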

\begin{proof}
\eqref{item:thintriangle} is the definition; \eqref{item:4pointcondition} is \cite[Proposition III.H.1.17]{MR1744486}; \eqref{item:alphageod_close_to_geod} is \cite[Lemme 17]{MR1086650};
  \eqref{item:path_close_to_geodesic} is the combination of \cite[Proposition III.H.1.6]{MR1744486} with \eqref{item:alphageod_close_to_geod}.
\end{proof}
If $X$ is as in Proposition~\ref{prop:hyperbolic}, we have:
\begin{lem}\label{lem:criterion_for_quasi_geodesic} Let $x,y,z,w \in X$ and $\alpha \in \R_+$. 
  Assume such that $(x,z)_y \leq \alpha$, $(y,w)_z \leq \alpha$ and $d(y,z) > 2\alpha + \delta$. Then
  \[ d(x,y) + d(y,z)+d(z,w) \leq d(x,w)+4\alpha + 2\delta.\]
\end{lem}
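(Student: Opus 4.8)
The plan is to derive the inequality purely from the four-point condition \eqref{item:4pointcondition} together with the definition of the Gromov product; no geodesics need to be drawn. Heuristically, $(x,z)_y \le \alpha$ says that $y$ sits within $\alpha$ of a geodesic from $x$ to $z$, and $(y,w)_z \le \alpha$ says that $z$ sits within $\alpha$ of a geodesic from $y$ to $w$, so the broken path $x \to y \to z \to w$ is close to geodesic; the conclusion quantifies this. The hypothesis $d(y,z) > 2\alpha + \delta$ will be used precisely to prevent the two ``almost geodesics'' from overlapping in a way that would shortcut the path.

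First I would rewrite the second hypothesis $(y,w)_z \le \alpha$ in additive form as $d(y,w) \ge d(y,z) + d(z,w) - 2\alpha$, and feed this into the Gromov product based at $y$:
\[ (z,w)_y = \tfrac12\bigl(d(y,z) + d(y,w) - d(z,w)\bigr) \ge d(y,z) - \alpha. \]
By the hypothesis $d(y,z) > 2\alpha + \delta$, this gives $(z,w)_y > \alpha + \delta$.

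Next I would invoke \eqref{item:4pointcondition} with base point $y$ applied to the points $x$, $z$, $w$, which yields
\[ \min\bigl((x,w)_y,\ (z,w)_y\bigr) \le (x,z)_y + \delta \le \alpha + \delta. \]
Since $(z,w)_y > \alpha + \delta$, the minimum is forced onto the first term, so $(x,w)_y \le \alpha + \delta$. Writing this out gives $d(x,w) \ge d(x,y) + d(y,w) - 2\alpha - 2\delta$, and substituting the lower bound $d(y,w) \ge d(y,z) + d(z,w) - 2\alpha$ obtained in the first step produces $d(x,w) \ge d(x,y) + d(y,z) + d(z,w) - 4\alpha - 2\delta$, which is the assertion after rearranging.

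I do not expect any genuine obstacle here: the entire argument is two applications of the definition of the Gromov product sandwiching one application of the four-point condition, and the only point requiring care is tracking the constants and checking that the strict inequality $d(y,z) > 2\alpha + \delta$ is exactly strong enough to break the minimum in \eqref{item:4pointcondition} in favour of the term $(x,w)_y$.
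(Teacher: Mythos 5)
Your proposal is correct and follows essentially the same route as the paper's proof: both establish $(z,w)_y \geq d(y,z)-\alpha > \alpha+\delta$ from the hypothesis $(y,w)_z \leq \alpha$, use the four-point condition to force $(x,w)_y \leq \alpha+\delta$, and then unwind the two Gromov product bounds to obtain the stated inequality with the constants $4\alpha+2\delta$.
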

\begin{proof} By \eqref{item:4pointcondition}, we know
  \[ \min ( (x,w)_y,(z,w)_y) \leq (x,z)_y+\delta \leq \alpha + \delta.\]
  But by the assumption $(y,w)_z \leq \alpha$ and $d(z,y)> 2\alpha+\delta$, we have
  \[ (z,w)_y \geq (z,w)_y + (y,w)_z - \alpha = d(z,y)- \alpha > \alpha +\delta,\]
  so the preceding inequality is just $(x,w)_y \leq \alpha + \delta$. We deduce
  \[ d(x,y) + d(y,z) + d(z,w) \leq d(x,y) + d(y,w) + 2\alpha \leq d(x,w) + 2\alpha + 2\delta + 2\alpha,\]
  and the lemma is proven.
\end{proof}

\section{The proof}
Let $S$ be the support of $\mu$ and $X$ the geometric realization of the Cayley graph of $\Gamma$ with respect to $S$. Write $e$ for the identity element of $\Gamma$, and let $|\cdot|=d(e,\cdot)$ denote the word-length with respect to $S$. $X$ is a hyperbolic space; pick $\delta \geq 1$ satisfying all the conclusions of Proposition~\ref{prop:hyperbolic}. I assume $\delta \geq 1$ to be able to freely bound $A+B\delta \leq (A+B)\delta$ whenever $A,B\geq 0$. All the constants and polynomials appearing in this note will depend on $\Gamma, \mu$. The proof uses the hyperbolicity in the following form. This will be of no use here, but the lemma holds with $K=O(\log n)$.
I make the choice to prove the next lemma in the vocabulary of random walks, but some might prefer the way to phrase the argument from \cite[\S 5.3]{magee2025strongconvergenceuniformlyrandom}.
\begin{lem}
\label{lem:geodesic1} Let $K,k \geq 2$. There is a polynomial $P$ and a constant $C$ such that for every $h_1,\dots,h_k \in \Gamma$ with $|h_1|+\dots+|h_k|\leq |h_1\dots h_k|+K$,
\[
\prob(g_{n}=h_1 \dots h_k)\leq P(n)\sum_{n_1+\dots+n_k = n+\lfloor C\log n\rfloor}\prod_{i=1}^k\prob(g_{n_i}  = h_i).
\]
\end{lem}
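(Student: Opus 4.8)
The plan is to induct on $k$, the whole geometric content sitting in the base case $k=2$, which I would handle by a first-passage decomposition of the walk near the point $h_{1}$. Throughout I would set $\varepsilon:=\min_{u\in S}\mu(u)>0$ and note $\mu(e)>0$, and I would work with $n$ larger than a constant $n_{0}$ depending only on $K$ and $\delta$ (for smaller $n$ the inequality must be read with a harmless modification; that range is irrelevant for the applications). The starting point is that the hypothesis $|h_{1}|+|h_{2}|\le |h_{1}h_{2}|+K$ says exactly $(e,h_{1}h_{2})_{h_{1}}\le K/2$, so Proposition~\ref{prop:hyperbolic}\eqref{item:path_close_to_geodesic}, applied to the point $z=h_{1}$ and to the path of length $\le n$ traced in $X$ by $g_{0},\dots,g_{n}$ (each step is one edge or a halt), shows that on $\{g_{n}=h_{1}h_{2}\}$ the trajectory visits some vertex within $R:=K/2+\delta(\log n+1)+1$ of $h_{1}$. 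The decisive quantitative remark is $R=O(\log n)$, so the ball $B=\{g:\ d(g,h_{1})\le R\}$ has at most $|S|^{R+1}=\mathrm{poly}(n)$ elements.

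Next I would let $T$ be the first time $t\le n$ with $g_{t}\in B$ (well defined on $\{g_{n}=h_{1}h_{2}\}$), write $g_{T}=h_{1}s$ with $|s|\le R$, and apply the strong Markov property to split the walk at $T$, which gives
\[\prob(g_{n}=h_{1}h_{2})\ \le\ \sum_{t=0}^{n}\ \sum_{|s|\le R}\ \prob(g_{t}=h_{1}s)\,\prob(g_{n-t}=s^{-1}h_{2}).\]
Each factor is then compared to a return probability to $h_{1}$, resp.\ $h_{2}$: inserting a geodesic word for $s$ and using the symmetry of $\mu$ gives $\prob(g_{t}=h_{1}s)\le\varepsilon^{-|s|}\prob(g_{t+|s|}=h_{1})$ and $\prob(g_{n-t}=s^{-1}h_{2})\le\varepsilon^{-|s|}\prob(g_{n-t+|s|}=h_{2})$, and inserting halts at a fixed point gives $\prob(g_{m}=h)\le\mu(e)^{-j}\prob(g_{m+j}=h)$ for $j\ge 0$. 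Since $R=O(\log n)$, for $C$ large and $n\ge n_{0}$ one can choose the number of halts so that the two time parameters sum to exactly $n+\lfloor C\log n\rfloor$; the total cost of these manipulations is $\varepsilon^{-2|s|}\mu(e)^{-\lfloor C\log n\rfloor}=\mathrm{poly}(n)$, and each resulting term is one summand of $\sum_{n_{1}+n_{2}=n+\lfloor C\log n\rfloor}\prob(g_{n_{1}}=h_{1})\prob(g_{n_{2}}=h_{2})$. Summing over the $\le(n+1)|B|=\mathrm{poly}(n)$ pairs $(t,s)$ proves the case $k=2$.

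For $k\ge 3$ I would peel off $h_{k}$. The triangle inequality gives both $|h_{1}\cdots h_{k-1}|+|h_{k}|\le\sum_{i}|h_{i}|\le|h_{1}\cdots h_{k}|+K$ and $\sum_{i<k}|h_{i}|=\sum_{i}|h_{i}|-|h_{k}|\le|h_{1}\cdots h_{k}|+K-|h_{k}|\le|h_{1}\cdots h_{k-1}|+K$, so the case $k=2$ applies to the pair $(h_{1}\cdots h_{k-1},\,h_{k})$ and the inductive hypothesis applies to $(h_{1},\dots,h_{k-1})$. Substituting one into the other, every intermediate time that appears is $\le 2n$, so the intermediate polynomials evaluated there are still polynomial in $n$; a final re-indexing and round of padding restores the exact constraint $n_{1}+\cdots+n_{k}=n+\lfloor C_{k}\log n\rfloor$, and the product of all accumulated polynomial factors is the desired $P(n)$.

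I do not expect a serious obstacle: the one real input is the passage from $\{g_{n}=h_{1}h_{2}\}$ to ``the walk enters an $O(\log n)$-ball around $h_{1}$'', which is exactly Proposition~\ref{prop:hyperbolic}\eqref{item:path_close_to_geodesic} together with the polynomial volume of logarithmic balls, plus the strong Markov property. The fiddly part is purely bookkeeping: carrying out the repeated ``padding'' so that all time indices sum to the prescribed value $n+\lfloor C\log n\rfloor$, and checking that the accumulated $\varepsilon^{-O(\log n)}$, $\mu(e)^{-O(\log n)}$ and $|S|^{O(\log n)}$ factors — all of the form $n^{O(1)}$ — stay polynomial through the induction.
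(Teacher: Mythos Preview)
Your argument is correct and follows the same overall strategy as the paper: reduce to $k=2$ by induction, use Proposition~\ref{prop:hyperbolic}\eqref{item:path_close_to_geodesic} to see that on $\{g_n=h_1h_2\}$ the walk enters an $O(\log n)$-ball around $h_1$, stop at the first such time $T$, and then pay a polynomial price to reroute the walk through $h_1$ itself. The only difference is how this last rerouting is packaged. You decompose according to the hitting point $g_T=h_1s$, use the convolution inequalities $\prob(g_t=h_1s)\le\varepsilon^{-|s|}\prob(g_{t+|s|}=h_1)$ and $\prob(g_{n-t}=s^{-1}h_2)\le\varepsilon^{-|s|}\prob(g_{n-t+|s|}=h_2)$, and pad with halts; this requires the volume bound $|B|\le|S|^{R+1}=\mathrm{poly}(n)$ to control the sum over $(t,s)$. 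The paper instead builds a coupling: it inserts, at time $T$, an independent $2a_0$-step excursion which with probability $\ge c^{2a_0}$ visits $h_1$ and returns to $g_T$, so that the modified walk $\tilde g$ is again a random walk, passes through $h_1$, and satisfies $\tilde g_{n+2a_0}=g_n$; a union bound over the visiting time then gives the desired sum directly, with no sum over $s$ and no ball-volume estimate. Both routes are short and give the same conclusion; the coupling avoids the enumeration over $s$, while your version makes the origin of each polynomial factor more explicit.
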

\begin{proof}
  The case of general $k$ follows from the case $k=2$ by a direct induction (that changes $P$ and $C$), so we can assume that $k=2$. The assumption is that $|h_1|+|h_2|\leq |h_1h_2|+K$ (equivalently $(e,h_1h_2)_{h_1} \leq \frac{K}{2}$), and we have to bound $\prob(g_n=h_1h_2)$. 

Let $p$ denote the path $(g_0=1,g_1,\dots,g_n)$. By \eqref{item:alphageod_close_to_geod} in Proposition~\ref{prop:hyperbolic}, if $g_n=h_1h_2$ then $d(h_1,p)\leq  \frac K 2+\delta(\log n+1)$. Write $a_0$ the integer part of $ \frac K 2+ \delta(\log n+1)$ and define $T$ as the first hitting time of the ball of radius $a_0$ around $h_1$. So by the preceding, $g_{n}=h \implies T\leq n$.
  
  Define another realization of the random walk as follows: let
  $g'_{i}$ be an independent copy of the random walk, and define
\[
\tilde{g}_{n}=\begin{cases}
g_{n} & \textrm{ if }n\leq T\\
g_{T}g'_{n-T} & \textrm{ if }T\leq n\leq T+2a_0\\
g_{T}g'_{2 a_0}g_{T}^{-1}g_{n-2a_{0}} & \textrm{ if }T+2a_{0}\leq n.
\end{cases}
\]
By the Markov property ($T$ is a stopping time) $(\tilde{g}_{n})_{n\geq0}$
is distributed as the random walk with step distribution $\mu$.

If we write $c:=\min_{s \in S} \mu(s)$, conditionally to $T$, the
event $A=\{g'_{2a_{0}}=e\textrm{ and }g'_{a_{0}}=g_{T}^{-1}h_1\}$
happens with probability $\geq c^{2a_0}$, and when it happens we have
$\tilde{g}_{n+2a_{0}}=g_{n}$ for every $n\geq T$ and
$\tilde{g}_{T+a_{0}}=h_1$. This is where we use our assumption that $\mu(e)>0$
to avoid any periodicity issues. Therefore, we can bound the
probability of the event
\[
B=\big\{\tilde{g}_{n+2a_{0}}=h_1h_2\textrm{ and }h_1\in\{\tilde{g_{k}},0\leq k\leq n+2a_{0}\}\big\}
\]
as follows 
\[
\prob(B)\geq\prob(g_{n}=h_1 h_2\textrm{ and }A)\geq c^{2a_0}\prob(g_{n}=h_1h_2).
\]
By the union bound we obtain 
\[
c^{2a_0}\prob(g_{n}=h_1h_2)\leq\sum_{k=0}^{n+2a_{0}}\prob(\tilde{g}_{n+2a_{0}}=h_1h_2\textrm{ and }\tilde{g_{k}}=h_1),
\]
which is the content of the lemma, because $2 a_0= O(\log n)$ and $c^{-2a_0}$ is bounded above by a polynomial.
\end{proof}
With Lemma~\ref{lem:geodesic1} in mind, we see that we need to understand the structure of geodesics between $e$ and a proper power. This will be described in lemma~\ref{lem:power_conjugacy_class} for powers of elements with large translation length. The following result, which will be used to deal with powers of elements with small translation length, is proved in the same way but is a bit simpler.

If $C$ is a conjugacy class in $\Gamma$, we write $|C|=\min\{|x| \mid x \in C\}$ and $C^d = \{x^d\mid x\in C\}$.
\begin{lem}\label{lem:single_conjugacy_class} Let $C$ be a conjugacy class. There is a subset $\mathcal{P}_C \subset C$ of size $O(|C|)$ such that the following holds: every $x \in C$ can be written $x=ghg^{-1}$ for $g \in \Gamma$, $h \in \mathcal{P}_C$ and $|g|+|h| + |g^{-1}| \leq |x|+ 14 \delta$.
\end{lem}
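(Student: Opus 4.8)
The plan is to take for $\mathcal P_C$ the set of cyclic permutations of one fixed shortest representative of $C$, and to obtain the decomposition of a given $x$ by conjugating it into $\mathcal P_C$ by an element of minimal length. Concretely, I would fix $c_0\in C$ with $|c_0|=|C|=:L$ together with a geodesic word $c_0=s_1\cdots s_L$, and for $j\in\mathbf{Z}/L\mathbf{Z}$ set $c_j:=(s_1\cdots s_j)^{-1}c_0(s_1\cdots s_j)=s_{j+1}\cdots s_{j+L}$ (indices mod $L$). Since this word has $L$ letters and $c_j\in C$, one gets $|c_j|=L$ and the word is geodesic. Then $\mathcal P_C:=\{c_0,\dots,c_{L-1}\}$ is a subset of $C$, consists of elements of length $|C|$, and has cardinality $\le L=O(|C|)$, which settles the size claim. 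For the decomposition, given $x\in C$: it is conjugate to $c_0$, hence to every $c_j$, so there is $g\in\Gamma$ with $g^{-1}xg\in\mathcal P_C$; fix such a $g$ of minimal length, with $g^{-1}xg=c_j$ say. As the generating set is symmetric, $|g^{-1}|=|g|$, and $|c_j|=|C|$, so everything reduces to the inequality $2|g|+|C|\le|x|+14\delta$.

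I would prove this by contradiction. Assuming $2|g|+|C|>|x|+14\delta$, and for the moment $L>7\delta$, consider the four points $e,\ g,\ gc_j,\ x=gc_jg^{-1}$, with $d(e,g)=d(gc_j,x)=|g|$ and $d(g,gc_j)=L$; the concatenated geodesics $[e,g]\cup[g,gc_j]\cup[gc_j,x]$ form a path of length $2|g|+L>|x|+14\delta$ from $e$ to $x$. Applying Lemma~\ref{lem:criterion_for_quasi_geodesic} with $\alpha=3\delta$ (legitimate because $d(g,gc_j)=L>2\alpha+\delta$) then forces one of the two Gromov products $(e,gc_j)_g$ and $(g,x)_{gc_j}$ to exceed $3\delta$. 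The two cases are symmetric, so say $t:=(e,gc_j)_g>3\delta$, noting $t\le\min(|g|,L)$ by the triangle inequality.

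The heart of the matter is a ``folding'' step using this large Gromov product. Let $g_1\in[e,g]$ be the vertex at distance $\lfloor t\rfloor$ from $g$, so $g=g_1p$ with $|p|=\lfloor t\rfloor$ and $|g_1|=|g|-\lfloor t\rfloor$, and let $w:=g\,s_{j+1}\cdots s_{j+\lfloor t\rfloor}\in g\cdot[e,c_j]$, the vertex at the same distance from $g$ along $[g,gc_j]$. Since in a thin triangle the two sides issuing from a vertex fellow-travel up to the Gromov product at that vertex, $u:=g_1^{-1}w=p\,s_{j+1}\cdots s_{j+\lfloor t\rfloor}$ has $|u|=O(\delta)$. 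The key computation, inserting a prefix of the word for $c_j$ and its inverse, is $g_1^{-1}xg_1=p\,c_j\,p^{-1}=u\,c_{j+\lfloor t\rfloor}\,u^{-1}$, where $c_{j+\lfloor t\rfloor}$ (indices mod $L$) is again one of our cyclic permutations. Hence $x=(g_1u)\,c_{j+\lfloor t\rfloor}\,(g_1u)^{-1}$, with $g_1u$ conjugating $x$ into $\mathcal P_C$ and $|g_1u|\le|g_1|+|u|\le(|g|-\lfloor t\rfloor)+O(\delta)<|g|$ (because $t>3\delta$, so the loss $|u|$ stays strictly below the gain $\lfloor t\rfloor$), contradicting minimality of $|g|$. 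The other case is handled the same way, folding at the corner $gc_j$ and shortening $g$ from its final letters, again producing a conjugator into $\mathcal P_C$ of length $<|g|$. Finally, the excluded range $|C|\le7\delta$ concerns only the finitely many conjugacy classes meeting the ball of radius $7\delta$, and is disposed of separately, for instance by enlarging $\delta$.

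The step I expect to be the main obstacle is precisely this fold: one must check that the shortened conjugator still lands in $\mathcal P_C$ — which is exactly why $\mathcal P_C$ should be the set of \emph{cyclic permutations} of a single representative, and not, say, the set of all short conjugates — and one must make the constants close up at $14\delta$. The latter couples the threshold $\alpha=3\delta$ in Lemma~\ref{lem:criterion_for_quasi_geodesic} (whose conclusion is then $4\alpha+2\delta=14\delta$) with the fellow-travelling constant for two sides of a thin triangle at a vertex, since the fold strictly shortens the conjugator only when that constant is smaller than $\alpha$, so the various constants coming out of Proposition~\ref{prop:hyperbolic} must be arranged accordingly; with a worse fellow-travelling constant the same argument still goes through, but with a larger number in place of $14\delta$.
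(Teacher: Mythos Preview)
Your approach is essentially the paper's: $\mathcal P_C$ the cyclic permutations of a shortest representative, $g$ a minimal-length conjugator into $\mathcal P_C$, the ``folding'' contradiction to bound $(e,gh)_g$ and $(g,x)_{gh}$ by $3\delta$ (the paper folds at the fixed depth $2\delta$ rather than at $\lfloor t\rfloor$, but the mechanism is identical), and then Lemma~\ref{lem:criterion_for_quasi_geodesic} with $\alpha=3\delta$ to get $14\delta$. The one point where your sketch does not close is small $|C|$: ``enlarging $\delta$'' is not a fix, since the threshold $7\delta$ moves with it; instead, for $|C|\le 9\delta$ the paper takes $\mathcal P_C=\{h\in C:|h|\le 9\delta\}$ (still of size $O(|C|)$, being contained in a fixed ball), and then minimality of $|g|$ forces $|h|\ge 8\delta$ --- otherwise writing $g=g's$ gives $|shs^{-1}|\le 9\delta$ with $|g'|<|g|$ --- so the same argument and the same constant $14\delta$ go through.
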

\begin{proof}
  Let $k:= |C|$. If $k=0$, $C=\{1\}$ and the lemma is clear. So we can assume $k \geq 1$. If $k \leq 9\delta$, we set $\mathcal{P}_C = \{h \in C \mid |h|\leq 9 \delta\}$. If $k>9 \delta$, we choose $h=h_1 \dots h_k \in C$ of minimal length, and we take for $\mathcal{P}_C$ the cyclic conjugates of $h$: $\mathcal{P}_C =\{ h_i \dots h_k h_1\dots h_{i-1} \mid 1 \leq i \leq k\}$. We have $|\mathcal{P}_C| \leq A k$ for $A$ the size of the ball of radius $9 \delta$.

  Among all the ways to write $x=ghg^{-1}$ with $h \in \mathcal{P}_C$, pick the one that minimizes $|g|$. We can assume $|g|>0$ as otherwise there is nothing to prove. Observe that $|h| \geq 8\delta$. This is clear if $|C|> 9 \delta$. If $|C|\leq 9 \delta$, this is true because otherwise if we write $g=g's$ with $|g'|=|g|-1$ and $s \in S$, we have $x = g'(shs^{-1}) {g'}^{-1}$ with $|shs^{-1}|\leq 9 \delta$, which contradicts the minimality of $|g|$.

  We now claim that $(e,gh)_g \leq 3 \delta$. This will follow from considering  a geodesic triangle $\{e,g,gh\}$ where, in the case $|C|\geq 9\delta$, the edge $[g,gh]$ is the $g$-translate of the geodesic from $e$ to $h$ that occur in the definition of $\mathcal{P}_C$. Assume for a contradiction that  $(e,gh)_g > 3 \delta$. Then every point on $[e,gh]$ is at distance $> 3 \delta$ from $g$, so if $x$ is the point at distance $2\delta$ from $g$ on $[e,g]$ (which exists because $d(e,g)>3\delta$), then it is at distance $>\delta$ from $[e,gh]$. By the $\delta$-hyperbolicity of the triangle $\{e,g,gh\}$, $x$ is at distance $\leq \delta$ from a point $g' \in [g,gh]$. We write $g'=gu$ where $u$ is a prefix of $h$, so that $x =g' (u^{-1} h u) {g'}^{-1}$ is an expression with $(u^{-1} h u) \in \mathcal{P}_C$ (here we use our specific choice of $[g,gh]$ in the case $|C|\geq 9\delta$). Moreover, we have
  \[ |g'| = d(e,g') \leq d(e,x) + d(x,g') = |g|-2\delta + d(x,g') \leq |g|-\delta<|g|,\]
  which contradicts the minimality of $g$. This concludes the proof that  $(e,gh)_g \leq 3 \delta$.
  
  By applying the same reasoning to $h^{-1}$, we deduce $(x,g)_{gh} = (e,gh^{-1})_g \leq 3 \delta$. Finally, we have $d(g,gh)=|h| \geq 8\delta$, so the hypotheses of Lemma~\ref{lem:criterion_for_quasi_geodesic} are satisfied for $\alpha = 3\delta$, and we get
  \[ |g|+|h|+|g^{-1}| = d(e,g)+d(g,gh)+d(gh,ghg^{-1}) \leq d(e,ghg^{-1})+14 \delta.\qedhere\]
\end{proof}
We have the following variant of Lemma~\ref{lem:single_conjugacy_class}.
\begin{lem}\label{lem:power_conjugacy_class} Let $C$ be a conjugacy class with $|C| \geq 16\delta$ and $d\geq 2$.

Every $x \in C^d$ can be written $x=gh^d g^{-1}$ and 
  \[ |g|+|h|+|h^{d-2}|+|h|+|g^{-1}| \leq |x| + 34 \delta.\]
\end{lem}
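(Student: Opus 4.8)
\emph{Proof plan.} The plan is to mirror the proof of Lemma~\ref{lem:single_conjugacy_class}, but carrying along the extra structure coming from the $d$-th power. Fix a cyclically reduced representative $z=z_1\cdots z_k$ of $C$, so that $k=|C|\ge 16\delta$ and every cyclic conjugate $z_i\cdots z_k z_1\cdots z_{i-1}$ again lies in $C$, hence is spelled geodesically by that length-$k$ word. Given $x\in C^d$, write $x=y^d$ with $y=bzb^{-1}$; cyclically permuting $z$ and absorbing the prefix into $b$ produces an expression $x=g h^d g^{-1}$ in which $h$ is (the element spelled by) a cyclic conjugate of $z$, so $|h|=k$. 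Among all such expressions I would choose one minimizing $|g|$. The quantity to bound is then exactly the length of the path $e\to g\to gh\to gh^{d-1}\to gh^d\to gh^dg^{-1}=x$.

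The geometric input is that, since every cyclic conjugate of $z$ has length $k>8\delta$, every subword of the periodic word $\cdots z_1\cdots z_k z_1\cdots z_k\cdots$ of length $\le k$ is geodesic; hence for each $m\ge1$ and each $g^{*}\in\Gamma$ the path
\[
\gamma_{g^{*},m}\;:=\;g^{*}\big([e,h]\cup h[e,h]\cup\cdots\cup h^{m-1}[e,h]\big)
\]
from $g^{*}$ to $g^{*}h^{m}$ is a $k$-local geodesic with $k>8\delta$. By the standard local-to-global lemma for hyperbolic spaces (\cite[III.H.1.13]{MR1744486}) such a path is a $(\lambda,c)$-quasigeodesic with $\lambda,c$ depending only on $\delta$, and is contained in the $2\delta$-neighbourhood of any geodesic joining its endpoints. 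Two consequences: (i) $d\big(h,[e,h^{d-1}]\big)\le 2\delta$, so Proposition~\ref{prop:hyperbolic}\eqref{item:alphageod_close_to_geod} gives $(e,h^{d-1})_h\le 2\delta$, and since $(h,h^d)_{h^{d-1}}=(e,h^{d-1})_h=\tfrac12\big(|h|+|h^{d-2}|-|h^{d-1}|\big)$ the same bound holds for $(h,h^d)_{h^{d-1}}$, and likewise $(e,h^2)_h\le 2\delta$; (ii) any geodesic $[g,gh^d]$ lies in the $2\delta$-neighbourhood of $\gamma_{g,d}$, and, by the quasigeodesic estimate, the portion of $\gamma_{g,d}$ within distance $k-O(\delta)$ of $g$ is the initial segment $g[e,h]$.

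Consequence (ii) is what lets the minimality argument of Lemma~\ref{lem:single_conjugacy_class} run: if $(e,gh^d)_g$ exceeded a suitable multiple of $\delta$, a point $p$ on $[e,g]$ at distance a few $\delta$ from $g$ would be too far from $[e,gh^d]$, hence within $\delta$ of $[g,gh^d]$, hence within $O(\delta)$ of a vertex $gv$ of $\gamma_{g,d}$ with $v$ a \emph{prefix of the word spelling $h$} (by (ii), one lands in the first segment); writing $h=vw$ this gives $x=(gv)(wv)^d(gv)^{-1}$ with $wv$ again a cyclic conjugate of $z$ and $|gv|<|g|$, a contradiction. Thus $(e,gh^d)_g\le\alpha$ for an explicit $\alpha=O(\delta)$, and a direct computation shows $(g,x)_{gh^d}=\tfrac12\big(|h^d|+|g|-|gh^d|\big)=(e,gh^d)_g\le\alpha$ as well. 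Finally I would assemble the bound by applying Lemma~\ref{lem:criterion_for_quasi_geodesic} twice: once to $g,\,gh,\,gh^{d-1},\,gh^d$ (middle edge $[gh,gh^{d-1}]$ of length $|h^{d-2}|$, which for $d\ge3$ exceeds $2(2\delta)+\delta$) to get $|h|+|h^{d-2}|+|h|\le|h^d|+10\delta$, and once to $e,\,g,\,gh^d,\,x$ (middle edge $[g,gh^d]$, whose length is $\ge 2\alpha+\delta$ by the quasigeodesic lower bound on $|h^d|$, $d\ge2$) to get $|g|+|h^d|+|g^{-1}|\le|x|+4\alpha+2\delta$; adding gives the claim with error $4\alpha+12\delta$. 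The degenerate case $d=2$ is handled by replacing the first step with $d(g,gh)+d(gh,gh^2)\le|h^2|+4\delta$, which is just $(e,h^2)_h\le2\delta$.

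The hard part is keeping the cumulative error down to $34\delta$. Unlike in Lemma~\ref{lem:single_conjugacy_class}, the side $[g,gh^d]$ of the relevant triangle is not a translate of a geodesic spelled by a controlled word, so one must interpose the local geodesic $\gamma_{g,d}$ and track the $2\delta$-neighbourhood and quasigeodesic constants carefully through the minimality argument and through the two applications of Lemma~\ref{lem:criterion_for_quasi_geodesic}; this delicate bookkeeping, together with the need for the $k$-local-geodesic lemma to apply with good constants, is exactly why one asks for the comfortable threshold $|C|\ge16\delta$ rather than a smaller multiple of $\delta$.
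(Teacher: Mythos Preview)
Your strategy matches the paper's: choose $h\in C$ of minimal length and then $g$ minimizing $|g|$ among $x=gh^dg^{-1}$; use \cite[III.H.1.13]{MR1744486} on the concatenated path to control both the inner quantity $|h|+|h^{d-2}|+|h|$ and, via the minimality argument, the Gromov product $(e,gh^d)_g$; finish with Lemma~\ref{lem:criterion_for_quasi_geodesic}. There is however one genuine slip. Your ``direct computation'' of $(g,x)_{gh^d}$ is wrong: $d(g,x)=|h^dg^{-1}|=|gh^{-d}|$, not $|gh^d|$, so
\[
(g,x)_{gh^d}=\tfrac12\big(|h^d|+|g|-|gh^{-d}|\big)=(e,gh^{-d})_g,
\]
which is not $(e,gh^d)_g$ in general. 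The paper closes this gap by rerunning the minimality argument with $h^{-1}$ in place of $h$ (note $|h^{-1}|=|h|$ is minimal in $C^{-1}$ and $x^{-1}=g(h^{-1})^dg^{-1}$ with the same minimal $g$), obtaining $(e,gh^{-d})_g\le 6\delta$.

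A few smaller corrections. Your consequence~(ii) has the directions of \cite[III.H.1.13]{MR1744486} reversed: the local geodesic lies in the $2\delta$-neighbourhood of the true geodesic, while the true geodesic lies only in the $3\delta$-neighbourhood of the local one; it is the latter that the minimality step needs. The claim that the nearby point of $\gamma_{g,d}$ must lie in the first segment $g[e,h]$ is unnecessary (and not obviously forced by the constants when $|C|$ is near $16\delta$): if the nearby vertex is $gc(t)=gh^ju$ with $u$ a prefix of $h$, then $c(t)^{-1}hc(t)=u^{-1}hu$ is still a cyclic conjugate of $h$ and the contradiction $|gc(t)|<|g|$ goes through for any $j$. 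Finally, the paper gets $|h|+|h^{d-2}|+|h|\le|h^d|+8\delta$ directly from the $2\delta$-closeness of $h$ and $h^{d-1}$ to $[e,h^d]$ rather than via a second application of Lemma~\ref{lem:criterion_for_quasi_geodesic}; combined with $\alpha=6\delta$ this yields $26\delta+8\delta=34\delta$, whereas your route would give $36\delta$.
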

\begin{proof} Among all the ways to write $x = g h^d g^{-1}$ with $h \in C$ of minimal length, choose one that minimizes $|g|$. The difference with Lemma~\ref{lem:single_conjugacy_class} is that we do not take $h$ that minimizes $|h^d|$. We will have to pay some price for this. 


Fix a geodesic between $e$ and $h$ in $X$, and duplicate it $d$ times to obtain a path $c$ from $e$ to $h^d$. Consider also a geodesic $[e,h^d]$. The path $c$ is a $|C|$-local geodesic by minimality of $|h|$. By the assumption $|C| \geq 16\delta \geq 8\delta +1$ and \cite[Theorem III.H.1.13]{MR1744486}, we get:
  \begin{enumerate}
  \item\label{item:c_close} the image of $c$  is contained in a $2\delta$-neighbourhood of $[e,h^d]$,
  \item\label{item:geod_close} $[e,h^d]$ is contained in a $3\delta$-neighbourhood of the image of $c$,
 \item\label{item:c_quasigeodesic}
   $c$ is a $(k,2\delta)$-quasi-geodesic with $k = \frac{16\delta +  4\delta}{16\delta - 4\delta}=\frac 5 3$.
  \end{enumerate}
  Observe for further reference that the first conclusion implies that $|h|+|h^{d-1}| \leq |h^d|+4\delta$, because if $x \in [e,h^d]$ is a point at distance $\leq 2\delta$ from $h$, we have $|h^{d}| = d(e,x) + d(x,h^d) \geq d(e,h) + d(h,h^d) - 2 d(x,h) \geq |h|+|h^{d-1}|-4\delta$. By applying the same for $d-1$ (if $d>2$), we deduce
  \begin{equation}\label{eq:length_of_powers_of_h}
    |h| + |h^{d-2}|+|h| \leq |h^d|+ 8\delta.
  \end{equation}

  Our next goal is to prove that $(e,gh^d)_g \leq 6 \delta$. Assume for a contradiction that this is not the case. Then every point on $[e,gh^d]$ is at distance $> 6 \delta$ from $g$, so if $z$ is the point at distance $5\delta$ from $g$ on $[e,g]$, then it is at distance $>\delta$ from $[e,gh^d]$. By hyperbolicity $z$ is at distance $\leq \delta$ from $g[e,h^d]$, so by \eqref{item:geod_close} in Proposition~\ref{prop:hyperbolic} it is at distance $\leq 4 \delta$ from a point in $gc$: there is $t$ (which can be taken as an integer) such that $d(gc(t),z)\leq 4\delta$. Then we have $z= g' {h'}^d {g'}^{-1}$ where $g'=gc(t)$ and $h':=c(t)^{-1} h c(t)$. On the one hand, $h'$ is a cyclic conjugate of $h$ so is another representative in $C$ of minimal length. On the other hand
  \[ |g'| = d(e,gc(t)) \leq d(e,z) + d(z,gc(t)) \leq |g| - 5\delta + 4\delta <|g|,\]
  which contradicts the minimality of $g$. So we indeed have $(e,gh^d)_g \leq 6 \delta$.
  
  By applying the same reasoning to $h^{-1}$, we deduce $(x,g)_{gh^d} = (e,gh^{-d})_g \leq 6 \delta$. So we are in the situation to apply Lemma~\ref{lem:criterion_for_quasi_geodesic} with $\alpha=6\delta$ ($|h^d| \geq 2\alpha+\delta=13\delta$ is guaranteed by \eqref{eq:length_of_powers_of_h}), and we get
  \[ |g|+|h^d|+|g^{-1}| \leq |x| + 26 \delta.\]
  The lemma follows from  \eqref{eq:length_of_powers_of_h}.
\end{proof}
Before we prove Theorem~\ref{thm:properPower}, we can state a simpler result that will be used in the proof, and that combines Lemma~\ref{lem:geodesic1} and Lemma~\ref{lem:single_conjugacy_class}.
\begin{lem}\label{lem:probability_conjugacy_class} There is a polynomial $P_1$ such that for every conjugacy class $C$ and every integer $n$,
  \[\prob(g_n \in C) \leq P_1(n) \rho^n.\]
\end{lem}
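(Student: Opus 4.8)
The plan is to combine Lemma~\ref{lem:single_conjugacy_class} with Lemma~\ref{lem:geodesic1} applied to a triple, exactly as the sentence preceding the statement suggests, and then to sum over the conjugacy class in a carefully chosen order. If $\prob(g_n \in C) = 0$ there is nothing to do, so assume otherwise. Since $g_n$ is a product of $n$ elements of the generating set $S$, we have $\prob(g_n = x) = 0$ unless $|x| \le n$; in particular only such $x$ contribute to $\prob(g_n \in C) = \sum_{x \in C}\prob(g_n = x)$, and $|C| \le n$. For each contributing $x$, Lemma~\ref{lem:single_conjugacy_class} gives a writing $x = g_x h_x g_x^{-1}$ with $h_x \in \mathcal{P}_C$ and $|g_x| + |h_x| + |g_x^{-1}| \le |x| + 14\delta \le n + 14\delta$; the assignment $x \mapsto (g_x, h_x)$ is injective, since $x = g_x h_x g_x^{-1}$ can be recovered from the pair. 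Applying Lemma~\ref{lem:geodesic1} with $k = 3$ and $K = \lceil 14\delta\rceil$ (legitimate since $\delta \ge 1$, so $K \ge 2$) to the triple $(g_x,\, h_x,\, g_x^{-1})$, whose product is $x$, bounds $\prob(g_n = x)$ by $P(n)$ times the sum over compositions $n_1 + n_2 + n_3 = m := n + \lfloor C\log n\rfloor$ of $\prob(g_{n_1} = g_x)\,\prob(g_{n_2} = h_x)\,\prob(g_{n_3} = g_x^{-1})$.

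The crux is to sum this over $x$ and exchange the order of summation \emph{before} estimating. Since $x \mapsto (g_x, h_x)$ is injective and all terms are nonnegative, for each fixed $(n_1, n_2, n_3)$ the quantity $\sum_x \prob(g_{n_1} = g_x)\prob(g_{n_2} = h_x)\prob(g_{n_3} = g_x^{-1})$ is dominated by the sum over all pairs $(g, h) \in \Gamma \times \mathcal{P}_C$ of $\prob(g_{n_1} = g)\prob(g_{n_2} = h)\prob(g_{n_3} = g^{-1})$, which factors into an $h$-sum times a $g$-sum. The $g$-sum $\sum_{g \in \Gamma}\prob(g_{n_1} = g)\prob(g_{n_3} = g^{-1})$ is exactly $\prob(g_{n_1+n_3} = e)$ --- concatenating an $n_1$-step walk with an independent $n_3$-step walk is an $(n_1+n_3)$-step walk --- hence is at most $\rho^{n_1+n_3}$ by~\eqref{eq:prob_for_RW_in_terms_of_spectral_radius}; the $h$-sum is at most $|\mathcal{P}_C|\,\rho^{n_2}$ by the same bound applied termwise. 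This is precisely where a naive argument fails: one cannot just sum the pointwise bound $\prob(g_n = x) \le \mathrm{poly}(n)\,\rho^n$ over $x \in C$, because $C$ may meet the ball of radius $n$ in exponentially many points. The gain is that, summed over all $x$ at once, the $g_x$- and $g_x^{-1}$-contributions recombine into a single return probability, which is controlled by Kesten's bound.

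Assembling the pieces gives $\prob(g_n \in C) \le P(n)\,|\mathcal{P}_C|\sum_{n_1+n_2+n_3 = m}\rho^m$. There are $O(m^2) = O(n^2)$ compositions, since $m = n + O(\log n)$; $|\mathcal{P}_C| = O(|C|) = O(n)$ by Lemma~\ref{lem:single_conjugacy_class} together with $|C| \le n$; and $\rho^m \le \rho^n$ because $\rho \le 1$ and $\lfloor C\log n\rfloor \ge 0$ for $n \ge 1$. Hence $\prob(g_n \in C) \le P_1(n)\rho^n$ with $P_1$ a polynomial of degree roughly $3 + \deg P$. The only point demanding attention is bookkeeping: keeping track of which quantities ($\delta$, $K$, the polynomial $P$ and constant $C$ furnished by Lemma~\ref{lem:geodesic1}, the implied constant in $|\mathcal{P}_C| = O(|C|)$) are fixed constants depending only on $\Gamma$ and $\mu$, versus those that grow with $n$. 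There is no genuine obstacle beyond that.
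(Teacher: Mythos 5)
Your proof is correct and follows essentially the same route as the paper's: reduce to $|C|\leq n$, apply Lemma~\ref{lem:single_conjugacy_class} to write each $x=ghg^{-1}$ with $h\in\mathcal{P}_C$, feed the triple into Lemma~\ref{lem:geodesic1}, and sum over $C$ by dominating with the sum over all pairs $(g,h)\in\Gamma\times\mathcal{P}_C$ so that the $g$-sum collapses to a return probability. The bookkeeping ($|\mathcal{P}_C|=O(n)$, $O(n^2)$ compositions, $\rho^{n'}\leq\rho^{n}$) matches the paper's.
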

\begin{proof} We use Lemma~\ref{lem:single_conjugacy_class} and its notation.
  If $|C|>n$, we have $\prob(g_n \in C)=0$ so we can assume $|C|\leq n$, so that $|\mathcal{P}_C| \leq Kn$.

  Let $x \in C$, and $g \in G$ and $h \in \mathcal{P}_C$ given by Lemma~\ref{lem:single_conjugacy_class}. By Lemma~\ref{lem:geodesic1}, there is a polynomial $Q$ and and constant $C_1$ such that, if $n'=n+\lfloor C_1 \log n\rfloor$,
  \begin{align*} \prob(g_n = x)  &\leq Q(n) \sum_{k_1+k_2+k_3 = n'} \prob(g_{k_1}=g) \prob(g_{k_2}=h) \prob(g_{k_3}=g^{-1}).
  \end{align*}
  Summing over all $x \in C$, we get
  \begin{align*} \prob(g_n \in C) &\leq \sum_{h \in \mathcal{P}_C} Q(n) \sum_{k_1+k_2+k_3 = n'}  \prob(g_{k_2}=h) \sum_{g \in \Gamma} \prob(g_{k_1}=g) \prob(g_{k_3}=g^{-1})\\
    &= \sum_{h \in \mathcal{P}_C} Q(n) \sum_{k_1+k_2+k_3 = n'} \prob(g_{k_2}=h) \prob(g_{k_1+k_3} = e)\\
    &\leq |\mathcal{P}_C| Q(n) \sum_{k_1+k_2+k_3 = n'} \rho^{k_1+k_2+k_3}.
  \end{align*}
The last line is \eqref{eq:prob_for_RW_in_terms_of_spectral_radius}. This concludes the proof, because $|\mathcal{P}_C|\leq Kn$ and the number of triples $(k_1,k_2,k_3)$ with $k_1+k_2+k_3 = n'$ is $\lesssim n^2$.
\end{proof}

\begin{proof}[Proof of Theorem~\ref{thm:properPower}] Let us write
  \[ \prob(g_n\textrm{ is a proper power}) \leq  p_1(n)+p_2(n),\]
  where $p_1(n)$ is the probability that $g_n$ is a proper power of an element in a conjugacy class with $|C|<16 \delta$ and $p_2(n)$ is the probability that $g_n$ is a proper power of an element in a conjugacy class with $|C|\geq 16\delta$. The sum might be strict because $g_n$ can be both at the same time.

  Let $C$ be a conjugacy class with $|C|<16\delta$. There are two cases. Either $\sup_d |C^d|<\infty$, in which case $\{C^d\mid d\in \N\}$ is actually finite. Otherwise, there is $K$ such that $|C^d| \geq Kd$ (\cite[Proposition III.$\Gamma$.3.15]{MR1744486} and its proof). In particular, for every $n$ the number of conjugacy classes of the form $C^d$ for a conjugacy class with $|C|<16\delta$ and containing an element of length $\leq n$ is $O(n)$. So by Lemma~\ref{lem:probability_conjugacy_class} we deduce
  \[p_1(n) = O(nP_1(n) \rho^n).\]

  If $|C| \geq 16\delta$ and $d\geq 2$, we can as in the proof of Lemma~\ref{lem:probability_conjugacy_class} apply Lemmas~\ref{lem:power_conjugacy_class} and~\ref{lem:geodesic1} to get a polynomial $Q$ and a constant $C_2$ such that, if $n'=n+\lfloor C_2 \log n\rfloor$,
\begin{multline*} \prob(g_n \in C^d) \leq \sum_{g \in \Gamma,h\in C} Q(n) \sum_{k_1+\dots + k_5 = n'} \prob(g_{k_1}=g) \\\prob(g_{k_2} = h) \prob(g_{k_3} = h^{d-2}) \prob(g_{k_4} = h) \prob(g_{k_5} = g^{-1}).
\end{multline*}

  We can bound $\sum_g \prob(g_{k_1}=g) \prob(g_{k_5}=g^{-1}) = \prob(g_{k_1+k_5}=e) \leq \rho^{k_1+k_5}$ and $\prob(g_{k_3}=h^{d-2})\leq \rho^{k_3}$ by \eqref{eq:prob_for_RW_in_terms_of_spectral_radius}. Using that $\mu$ is symmetric, we have
  \begin{align} \label{eq:musymmetric}\sum_{h \in C} \prob(g_{k_2} = h)\prob(g_{k_4} = h)& = \sum_{h \in C}\prob(g_{k_2} = h)\prob(g_{k_4} = h^{-1})\\& = \prob(g_{k_2+k_4}=e \textrm{ and } g_{k_2}\in C).\nonumber
  \end{align}
We deduce 
\[ \prob(g_n \in C^d) \leq  (n')^2Q(n) \sum_{k+\ell \leq n'} \rho^{n'-k-\ell} \prob(g_{k+\ell}=e \textrm{ and } g_{k}\in C).\]
Since we have $|C^d|\geq d$ \cite[Proposition III.$\Gamma$.3.15]{MR1744486}, we have
\[ \sum_{d \geq 2} \prob(g_n \in C^d) \leq n (n')^2Q(n) \sum_{k+\ell \leq n'} \rho^{n'-k-\ell} \prob(g_{k+\ell}=e \textrm{ and } g_{k}\in C).\]
Summing over all conjugacy classes with $|C| \geq 16\delta$, we deduce by  \eqref{eq:prob_for_RW_in_terms_of_spectral_radius}
\[ p_2(n) \leq n (n')^2Q(n) \sum_{k+\ell \leq n'} \rho^{n'-k-\ell} \prob(g_{k+\ell}=e) \leq n (n')^3 Q(n) \rho^{n'}.\]
This concludes the proof of the Theorem because $n\leq n' =O(n)$.  
\end{proof}

\section{About the hypotheses}\label{sec:discussion}

Theorem~\ref{thm:properPower} has several hypotheses: on the group $\Gamma$ and on the step probability $\mu$.
\subsection{Hypotheses on $\mu$}
The fact that $\mu$ is symmetric is really used in the proof in \eqref{eq:musymmetric} and I do not know if it can be removed.

The other assumptions (support finite, generating and containing the identity) are unessential. The proof applies without any changes if the assumption $\mu(e)>0$ is replaced by an aperiodicity assumption (there is $n$ odd such that $\mu^{\ast n}(e)>0$), and can be slightly adapted in the periodic case to obtain the result. By appoximation, $\lim_n \prob(g_n\textrm{ is a proper power})^{\frac 1 n} = \rho(\mu)$ holds for every symmetric probability measure, see \cite[Proposition 6.3]{magee2025strongasymptoticfreenesshaar}.
\subsection{Hyperbolicity hypothesis}
The assumption that $\Gamma$ is hyperbolic is not necessay for the statement to hold, but it cannot be removed either. For example, Theorem~\ref{thm:properPower} is obvious whenever $\Gamma$ is amenable ($\rho=1$), and there are plenty amenable non-hyperbolic groups. In the other direction, there are examples of groups for which Theorem~\ref{thm:properPower} fails. For example, there are non-amenable finitely generated torsion groups (groups where every element has finite order): Golod-Shafarevich groups or free Burnside groups. In such groups, every element is a proper power: $g^n =e \implies g=g^{n+1}$. The existence of finitely presented non-amenable torsion groups is a well-known open problem, but there are also finitely presented groups failing Theorem~\ref{thm:properPower}: torsion-by-amenable group \cite{MR1985031}. Indeed, if $\Gamma$ has a normal torsion subgroup $\Lambda$ such that $\Gamma/\Lambda$ is amenable, then $g_n$ is a proper power whenever $g_n$ belongs to $\Lambda$. By amenability of $\Gamma/\Lambda$, this happens with sub-exponential probability:
  \[ \lim_n \prob(g_n\textrm{ is a proper power})^{\frac 1 n}=1.\]

\bibliographystyle{amsalpha}
\bibliography{strong_convergence}

\noindent Mikael de la Salle, \\
Institut Camille Jordan, CNRS, Universit\'{e} Lyon 1, France\\
\texttt{delasalle@math.univ-lyon1.fr}\\

\end{document}